\pgfplotsset{compat=1.18}
\pgfplotsset{every axis/.append style = {
                    label style = {font=\scriptsize},
                    tick label style = {font=\scriptsize},
                    legend style = {font=\scriptsize}
                }
            }
\renewcommand{\ALG@beginalgorithmic}{\small}
\def\minwrt[#1]{\underset{#1}{\text{minimize }}}
\def\argminwrt[#1]{\underset{#1}{\text{arg min }}}
\def\maxwrt[#1]{\underset{#1}{\text{maximize }}}
\def\argmaxwrt[#1]{\underset{#1}{\text{arg max }}}
\def\maxemphwrt[#1]{\underset{#1}{\text{\emph{maximize} }}}
\DeclareMathOperator*{\minimize}{minimize}
\DeclareMathOperator*{\maximize}{maximize}
\DeclareMathOperator*{\subjectto}{subject\:to}
\newcommand{\rarrow}[1]{\accentset{\rightharpoonup}{#1}}
\newcommand{\larrow}[1]{\accentset{\leftharpoonup}{#1}}
\newcommand{\R}{\mathbb{R}}
\newcommand{\N}{\mathbb{N}}
\newtheorem{theorem}{Theorem}
\newtheorem{remark}{Remark}
\newtheorem{proposition}{Proposition}
\definecolor{tol_vib_orange}{HTML}{EE7733}
\definecolor{tol_vib_blue}{HTML}{0077BB}
\definecolor{tol_vib_cyan}{HTML}{33BBEE}
\definecolor{tol_vib_magenta}{HTML}{EE3377}
\definecolor{tol_vib_red}{HTML}{CC3311}
\definecolor{tol_vib_teal}{HTML}{009988}
\definecolor{tol_vib_gray}{HTML}{BBBBBB}
\title{\huge
    Fast computation of the TGOSPA metric for multiple target tracking via unbalanced optimal transport
}
\author{%
    Viktor Nevelius Wernholm$^*$,
    Alfred Wärnsäter$^*$, 
    and Axel Ringh
    \thanks{
        This work was partially supported by the Wallenberg AI, Autonomous Systems and Software Program (WASP) funded by the Knut and Alice Wallenberg Foundation, Sweden, by 
        the Swedish Research Council (VR) under grant 2020-03454, and by KTH Digital Futures.
    }%
    \thanks{
            V.~Nevelius Wernholm is with Saab Surveillance, Saab AB, SE–412 89 Gothenburg, Sweden
        {\tt\scriptsize viktor.neveliuswernholm@saabgroup.com}
    }%
    \thanks{
        A.~Wärnsäter is with the Department of Mathematics, KTH Royal Institute of Technology, SE-100 44 Stockholm, Sweden
        {\tt\scriptsize alfwar@kth.se}
    }%
    \thanks{
        A.~Ringh is with the Department of Mathematical Sciences, Chalmers University of Technology and University of Gothenburg, SE-412 96 Gothenburg, Sweden 
        {\tt\scriptsize axelri@chalmers.se}
    }%
}
\begin{document}



\maketitle
\thispagestyle{empty}


\begin{abstract}
In multiple target tracking, it is important to be able to evaluate the performance of different tracking algorithms. The trajectory generalized optimal sub-pattern assignment metric (TGOSPA) is a recently proposed metric for such evaluations. The TGOSPA metric is computed as the solution to an optimization problem, but for large tracking scenarios, solving this problem becomes computationally demanding. In this paper, we present an approximation algorithm for evaluating the TGOSPA metric, based on casting the TGOSPA problem as an unbalanced multimarginal optimal transport problem. Following recent advances in computational optimal transport, we introduce an entropy regularization and derive an iterative scheme for solving the Lagrangian dual of the regularized problem. Numerical results suggest that our proposed algorithm is more computationally efficient than the alternative of computing the exact metric using a linear programming solver, while still providing an adequate 
approximation of the metric.
\end{abstract}

\begin{IEEEkeywords}
Estimation, Numerical algorithms, Optimization, Optimization algorithms 
\end{IEEEkeywords}


\section{Introduction} \label{sec:introduction}

\renewcommand{\thefootnote}{\fnsymbol{footnote}}
\setcounter{footnote}{1}
\makeatletter\def\Hy@Warning#1{}\makeatother
\footnotetext{Equal contribution.}
\renewcommand{\thefootnote}{\arabic{footnote}}
\setcounter{footnote}{0}

\IEEEPARstart{M}{ultiple}
target tracking (MTT) deals with the task of estimating targets that appear, disappear, and move within a scene, given data from noisy measurements.
A wide range of algorithms that solves this task has been developed, see, e.g.,
\cite{fortmann_sonar_1983, blackman_design_1999, blackman_multiple_2004}.
To objectively evaluate the performance of different MTT algorithms in test scenarios, where the ground truth trajectories of the objects are known, one needs a distance function that quantifies the error between the estimated target trajectories and the ground truth trajectories.
A recently proposed metric
for evaluating MTT algorithms is the trajectory generalized optimal sub-pattern assignment (TGOSPA) metric \cite{garcia-fernandez_metric_2020, garcia-fernandez_time-weighted_2021, krejčí2024tgospametricparametersselection}. This is an extension of the GOSPA metric \cite{rahmathullah_generalized_2017}. The latter is a metric between snapshots of ground truth and identified targets in a single time frame, and it penalizes localization errors for properly detected targets, missed targets in the ground truth, and falsely detected targets that do not exist in the ground truth. The TGOSPA metric generalizes this to a metric for tracks over multiple time frames by also including a penalty for so-called track switching, where the identities of ground truth targets erroneously get swapped.

Both the GOSPA metric and the TGOSPA metric are formulated as optimization problems. While the GOSPA metric involves solving an assignment problem, which can be done efficiently using, e.g.,
the Hungarian algorithm (see \cite[Chp.~11]{papadimitriou1982combinatorial}),
computation of the TGOSPA metric requires solving a set of coupled, consecutive assignment problems. 
Computing the TGOSPA metric is therefore in general only tractable when the tracking scenario contains a small number of targets \cite[Sec.~III.C]{garcia-fernandez_metric_2020}.
Therefore, \cite{garcia-fernandez_metric_2020, garcia-fernandez_time-weighted_2021} also suggests a linear programming (LP) relaxation of the TGOSPA metric, and this relaxation is in fact also a metric. Nevertheless, for large tracking scenarios, solving the corresponding LP still requires significant computational resources.

In this paper, which is based on the master's thesis \cite{nevelius2024efficient}, we derive an efficient method for approximately solving the LP-relaxed TGOSPA problem.
This is done by first casting the problem as an unbalanced multimarginal optimal transport problem
and deriving the Sinkhorn iterations
\cite{cuturi_sinkhorn_2013, benamou_iterative_2014, peyre2019computational, haasler_multimarginal_2021, ringh_graph-structured_2022, haasler_scalable_2023}.
However, in the multimarginal setting, the Sinkhorn iterations only partially alleviate the computational cost, as computing the projections needed in the iterations in general scales exponentially with the number of marginals. This can be overcome if, e.g., the cost function has a graph structure \cite{benamou_iterative_2014, haasler_multimarginal_2021, ringh_graph-structured_2022, haasler_scalable_2023}, but that is not the case for the problem considered here.
Nevertheless, the cost function in the problem is a decomposable structured tensor, and using the structure we develop a computationally efficient way to evaluate the projections needed. Finally,
we prove that the algorithm converges linearly, and demonstrate its performance on a number of examples.


\section{Background} \label{sec:background}

In this section, we introduce the TGOSPA metric and the basics of entropy regularized multimarginal optimal transport. The section is also used to set up notation.
To this end, we use $^\top$ to denote the transpose of a vector or a matrix, $\R_+$ to denote non-negative real numbers, and the operation $\exp$ on a vector, matrix, or tensor means elementwise exponential.

\subsection{TGOSPA} \label{sub_sec:t_gospa}

A trajectory $X$ on time steps $\{1, \dots, T\} \subset \N$ is defined
as the sequence $(\bm{x}^1, \dots, \bm{x}^T)$ of sets $\bm{x}^t$.
If a trajectory is alive and in a state $x^t \in \R^N$ at time step $t$, then $\bm{x}^t = \{ x^t \}$. Otherwise, $\bm{x}^t = \varnothing$.
Let $\mathcal{T}$ denote the set of all possible such trajectories. The TGOSPA metric is a metric between sets of trajectories, i.e., a function that maps
$\mathcal{T} \times \mathcal{T} \to \R_+$, and it can be formulated as an
integer linear program. To this end,
let $\mathbf{X}, \mathbf{Y} \in \mathcal{T}$, and denote with $\bm{x}_i^t$ and $\bm{y}_j^t$ the (possibly empty) state of ground truth $i$ and estimate $j$ at time step $t$. For a set of ground truths $\mathbf{X}$ consisting of $m$ trajectories, and a set of estimates $\mathbf{Y}$ consisting of $n$ trajectories, we can represent their associations in time step $t$ as an $(m + 1) \times (n + 1)$ binary matrix $W^t$. For such a matrix, $W_{i,j}^t = 1$ means that ground truth $i$ and estimate $j$ are assigned to each other at time step $t$, and $W_{i,j}^t = 0$ means that they are not. Every such matrix thus satisfies
\begin{subequations} \label{eq:assignment_matrices}
\begin{alignat}{3}
    & W^t_{(i,j)} \in \{0, 1\},          & \quad i = 1, \dots, m, & \quad j = 1, \dots, n,
        \label{eq:assignment_matrices_binary} \\
    & \sum_{i=1}^{m+1} W^t_{(i,j)} = 1,  & \quad j = 1, \dots, n, &
        \label{eq:assignment_matrices_targets} \\
    & \sum_{j=1}^{n+1} W^t_{(i,j)} = 1,  & \quad i = 1, \dots, m. &
        \label{eq:assignment_matrices_estimates}
\end{alignat}
\end{subequations}
Here, \eqref{eq:assignment_matrices_targets} implies that each ground truth trajectory is either assigned to exactly one estimated trajectory or unassigned (in the latter case, $W^t_{(m+1,j)} = 1$), and \eqref{eq:assignment_matrices_estimates} implies the corresponding property among the estimated trajectories. 

Let $\mathcal{W}_{(m, n)}$ be the set of assignment matrices described by \eqref{eq:assignment_matrices}. The TGOSPA metric
is the power $1/p$ of
\begin{equation} \label{eq:t-gospa}
\begin{multlined}[0.9\columnwidth]
    \minimize_{\substack{W^t \in \mathcal{W}_{(m, n)}, \\ t = 1, \dots, T}}
        \sum_{t=1}^T \sum_{i=1}^{m+1} \sum_{j=1}^{n+1} D^t_{(i,j)} W^t_{(i,j)} \\
        + \frac{\gamma^p}{2} \sum_{t=1}^{T-1} \sum_{i=1}^m \sum_{j=1}^n \left| W^{t+1}_{(i,j)} - W^t_{(i,j)} \right|,
\end{multlined}
\end{equation}
see \cite[Lem.~1]{garcia-fernandez_metric_2020}.
Here, $1 \leq p < \infty$
determines to what extent outliers are penalized, $\gamma > 0$
determines how much we penalize track-switches, and
\begin{equation*}
    (D^t_{(i,j)})^{1/p} =
    \begin{cases}
        \min\left( || x^t_i - y^t_j ||, c \right)\!, \! & \text{if} \: \bm{x}_i = \{ x_i \}, \bm{y}_j = \{ y_j \}, \\
        0,                                            & \text{if} \: \bm{x} = \varnothing, \bm{y} = \varnothing, \! \\
        c / 2^{1 / p}    \!                             & \text{else},
    \end{cases}
\end{equation*}
where $c > 0$ is a cut-off parameter, and where $\bm{x}^t_{m+1} = \varnothing$ and $\bm{y}^t_{n+1} = \varnothing$ for all $t$.

\begin{remark}
For $t=1,\dots,T$, the problem
\[
\minimize_{W^t \in \mathcal{W}_{(m, n)}} \sum_{i=1}^{m+1} \sum_{j=1}^{n+1} D^t_{(i,j)} W^t_{(i,j)}
\]
in \eqref{eq:t-gospa}
is an unbalanced optimal transport problem \cite{georgiou2008metrics, beier2023unbalanced}.  
\end{remark}

Problem \eqref{eq:t-gospa} is an integer linear program, and to reduce the computational cost, \cite{garcia-fernandez_metric_2020} also proposes the relaxed TGOSPA problem obtained by relaxing
the binary constraints \eqref{eq:assignment_matrices_binary} to $W^t_{(i,j)} \geq 0$.%
\footnote{Note that $W^t_{(i,j)} \leq 1$ is implicitly enforced by \eqref{eq:assignment_matrices_targets} and \eqref{eq:assignment_matrices_estimates}. Also note that the LP relaxation is known to not be tight, see \cite[Prop.~4]{nevelius2024efficient}.}
It can be shown that this relaxed TGOSPA problem also defines 
a metric on $\mathcal{T}$, see \cite[Sec.~IV]{garcia-fernandez_metric_2020}.

\subsection{Multimarginal Optimal Transport}\label{sec:MMOT}

Optimal transport deals with problems of how mass can be moved between an initial distribution and a target distribution as efficiently as possible. Such problems can be extended to multimarginal optimal transport problems, where an optimal transport plan between several distributions is sought.

Let $\hat{M} \in \R_+^{N_1 \times \ldots \times N_T}$ denote the transport tensor, $\hat{C} \in \R_+^{N_1 \times \ldots \times N_T}$ the cost tensor, and $\hat{\mu}_t \in \R_+^{N_t}$, for $t = 1, \ldots, T$, the marginals
of a multimarginal optimal transport problem. Additionally, for $t = 1, \dots, T$,
define the projections
$\hat{P}_{t}(\hat{M})_{i_t} = \sum_{i_1,\dots,i_{t-1},i_{t+1},\dots,i_T} \hat{M}_{i_1,\dots,i_T}$.
The most common type of multimarginal optimal transport problems is then
\begin{equation} \label{eq:mot}
\begin{aligned} 
    \minimize_{\hat{M} \in \R_+^{N_1 \times \ldots \times N_T}} \quad & \langle \hat{C}, \hat{M} \rangle \\
    \subjectto\hspace{3mm}                 \quad & \hat{P}_t(\hat{M}) = \hat{\mu}_t, \quad t = 1, \dots, T,
\end{aligned}    
\end{equation}
where
    $
    \langle A, B \rangle \coloneqq \sum_{i_1, \dots, i_L} A_{i_1, \dots, i_L} B_{i_1, \dots, i_L},
    $
for tensors $A$ and $B$ with $L$ indices.

Even though \eqref{eq:mot} is an LP, it is, in general, difficult to solve directly for larger instances since the number of variables 
increases exponentially with $T$.
This makes manipulating and storing the tensors $\hat{C}$ and $\hat{M}$ computationally infeasible.

A first step towards addressing this is to compute approximate solutions using entropy regularization
\cite{cuturi_sinkhorn_2013, benamou_iterative_2014, peyre2019computational, haasler_multimarginal_2021, ringh_graph-structured_2022, haasler_scalable_2023}.
To this end, for a tensor $A$ with $L$ indices, let the entropy of $A$ be defined by
\begin{equation*}
    E(A) = \sum_{i_1,\dots,i_L}
        (A_{i_1,\dots,i_L} \log(A_{i_1,\dots,i_L}) - A_{i_1,\dots,i_L} + 1).
\end{equation*}
For some regularization parameter $\varepsilon > 0$,
adding the term $\varepsilon E(\hat{M})$ to the objective function of \eqref{eq:mot} yields the regularized problem
\begin{equation} \label{eq:mot_reg}
\begin{aligned} 
    \minimize_{\hat{M} \in \R^{N^T}} \quad & \langle \hat{C}, \hat{M} \rangle + \varepsilon E(\hat{M}) \\
    \subjectto                 \quad & \hat{P}_t(\hat{M}) = \hat{\mu}_t, \quad t = 1, \dots, T.
\end{aligned}    
\end{equation}
A Sinkhorn algorithm for solving this problem is obtained by doing block coordinate ascent in the Lagrangian dual problem.
However, for such an algorithm to be efficient, one must also find an efficient way to compute the projections needed in the algorithm; cf.~\cite{benamou_iterative_2014, haasler_multimarginal_2021, ringh_graph-structured_2022, haasler_scalable_2023}.


\section{A Novel Algorithm for Approximation of the Relaxed TGOSPA Metric} \label{sec:novel_algorithm}

In this section,
we derive a Sinkhorn algorithm for approximating the optimal value of the LP-relaxed version of the TGOSPA metric. This is done in three main steps.
First, we reformulate the LP-relaxed version of \eqref{eq:t-gospa} as a multimarginal optimal transport problem over a high-order tensor.
Second, following the steps in the literature, we derive a block coordinate ascent algorithm in the dual to the entropy regularized version of this problem.
Third, we show that the structure of the cost function can be used to efficiently compute the projections needed in the algorithm.
Using this, we then formulate the full algorithm, and also show that it converges linearly.

\subsection{TGOSPA as Multimarginal Optimal Transport} \label{sub_sec:t_gospa_as_mot}

Let $\mathcal{M}_+$ denote the set of all non-negative tensors with $2T$ indices such that odd indices have
dimension $m + 1$, and even indices have dimension $n + 1$. We index such tensors using the notation
$M_{(i_1, j_1), \dots, (i_T, j_T)}$ for $i_1, \dots, i_T \in \{1, \dots, m+1\}$ and
$j_1, \dots, j_T \in \{1, \dots, n+1\}$,
and the element
$M_{(i_1, j_1), \dots, (i_T, j_T)}$ should be interpreted as the amount of mass transported along the trajectory
$(i_1, j_1), \dots, (i_T, j_T)$.
Analogously to the projections defined in Section~\ref{sec:MMOT}, let
\begin{equation*}
    P_t(M)_{(i_t, j_t)} =
        \sum_{(i_1, j_1), \dots, (i_T, j_T) \setminus (i_t, j_t)}
        M_{(i_1, j_1), \dots, (i_T, j_T)},
\end{equation*}
for $t = 1, \dots, T$, and let
\begin{equation*}
\begin{multlined}[\columnwidth]
    P_{t, t+1}(M)_{(i_t,j_t), (i_{t+1}, j_{t+1})} = \!\!\!
        \sum_{\substack{(i_1, j_1), \dots, (i_T, j_T) \setminus \\  (i_t, j_t), (i_{t+1}, j_{t+1})}} \!\!
        M_{(i_1, j_1), \dots, (i_T, j_T)},
\end{multlined}
\end{equation*}
for $t = 1, \dots, T - 1$. Here, $P_t(M)_{(i_t, j_t)}$ should be interpreted as the amount of mass at position
$(i_t, j_t)$ at time step $t$, and $P_{t, t+1}(M)_{(i_t, j_t), (i_{t+1}, j_{t+1})}$ should be interpreted as the mass
transported between position $(i_t, j_t)$ and position $(i_{t+1}, j_{t+1})$ from time step $t$ to $t+1$. 

Next, note that the assignment matrices $W^t$ can be naturally identified with the projections $P_t(M)$ for $t = 1, \dots, T - 1$. Using this idea,
we prove the following.
\begin{theorem}\label{thm:tgospa_as_mot}
    Let $\bar{\mu} \in \R^{m + 1}$ and $\Tilde{\mu} \in \R^{n + 1}$ be
    defined by $\bar{\mu} = \left( \bm{1}_m^\top, n \right)^\top$ and $\tilde{\mu} = \left( \bm{1}_n^\top, m \right)^\top$, where $\bm{1}_\ell \in \R^\ell$
    is a vector with all elements equal to one. Further, let
    \begin{equation*}
    F_{(i,j), (k,l)} =
    \begin{dcases}
        \frac{\gamma^p}{2} (1-\delta_{ik} \delta_{jl}) \big((1 - \delta_{k, m+1})(1 - \delta_{l, n+1}) \\
        \quad + (1 - \delta_{i, m+1})(1 - \delta_{j, n+1})\big), \: \text{if $i = k$}, \\
        \infty, \quad \text{otherwise},
    \end{dcases}
    \end{equation*}
    where $\delta$ denotes the Kronecker delta, i.e. $\delta_{i,j} = 1$ if $i = j$ and $0$ otherwise.
    Then the multimarginal optimal transport problem
    \begin{subequations}\label{eq:t-gospa_mot}
    \begin{equation}\label{eq:t-gospa_mot-problem}
    \begin{aligned}
        \minimize_{M \in \mathcal{M}_+} \quad & \langle C, M \rangle, \\
        \subjectto                            \quad & P_t(M) \mathbf{1}_{n+1} = \bar{\mu}, \: t = 1, \dots, T, \\
                                                    & P_t(M)^\top \mathbf{1}_{m+1} = \tilde{\mu}, \: t = 1, \dots, T,
    \end{aligned}
    \end{equation}
    where
    \begin{equation}\label{eq:t-gospa_mot-cost}
        \!\! C_{(i_1, j_1), \dots, (i_T, j_T)} =
        \sum_{t=1}^T D^t_{(i_t,j_t)} \! + \! \sum_{t=1}^{T-1} F_{(i_t,j_t), (i_{t+1},j_{t+1})},
    \end{equation}
    \end{subequations}
    and the LP-relaxed version of \eqref{eq:t-gospa} has the same optimal value.
\end{theorem}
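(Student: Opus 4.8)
The plan is to establish equality of the two optimal values by proving two inequalities, both organized around the identification $W^t \leftrightarrow P_t(M)$ mentioned before the statement. The starting point is the decomposition
\[
\langle C, M \rangle = \sum_{t=1}^T \langle D^t, P_t(M) \rangle + \sum_{t=1}^{T-1} \langle F, P_{t,t+1}(M) \rangle,
\]
which follows directly from the cost \eqref{eq:t-gospa_mot-cost} together with the definitions of $P_t$ and $P_{t,t+1}$. This splits the transport cost into a per-time-step detection part, matching the first sum of \eqref{eq:t-gospa}, and a consecutive-pairwise switching part, to be matched with the second sum.

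For the inequality ``MOT value $\ge$ relaxed TGOSPA value'', I would take an arbitrary feasible $M$ of finite cost and set $W^t := P_t(M)$. Since $F_{(i,j),(k,l)} = \infty$ whenever $i \neq k$, finiteness forces $M$ to be supported on trajectories with a constant ground-truth index, so for each fixed $i$ the slice $\pi^{t,i}_{j,l} := P_{t,t+1}(M)_{(i,j),(i,l)}$ is a coupling of $W^t_{(i,\cdot)}$ and $W^{t+1}_{(i,\cdot)}$. On the core block $i \le m$, $j \le n$, the marginal constraints of \eqref{eq:t-gospa_mot-problem} reduce exactly to \eqref{eq:assignment_matrices_targets}--\eqref{eq:assignment_matrices_estimates}, so each $W^t$ is feasible for the relaxed TGOSPA problem. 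The crux is a one-step optimal-transport identity: for $i \le m$ the restriction of $F$ to the second index is the shortest-path (star) metric on $\{1,\dots,n+1\}$ with the dummy node $n+1$ at distance $\gamma^p/2$ from every other node, whence its minimal coupling cost equals the Wasserstein-1 distance
\[
\min_{\pi} \langle F^{(i)}, \pi \rangle = \frac{\gamma^p}{2} \sum_{j=1}^n \left| W^{t+1}_{(i,j)} - W^t_{(i,j)} \right|,
\]
while for $i = m+1$ the cost $F$ vanishes identically. Summing these lower bounds over $t$ and $i$ shows $\langle C, M \rangle$ is at least the relaxed TGOSPA objective at $\{W^t\}$, hence at least its optimal value.

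For the reverse inequality I would start from an optimal relaxed-TGOSPA tuple $\{W^t\}$ and first reset each free corner entry $W^t_{(m+1,n+1)}$ to the core mass $\sum_{i \le m,\, j \le n} W^t_{(i,j)}$; this is zero-cost and leaves \eqref{eq:assignment_matrices_targets}--\eqref{eq:assignment_matrices_estimates} intact while making the row-$(m+1)$ and column-$(n+1)$ sums equal to $n$ and $m$, so that all marginal constraints of \eqref{eq:t-gospa_mot-problem} hold. I would then build $M$ one ground truth at a time: for each $i \le m$ choose couplings $\pi^{t,i}$ attaining the one-step minimum above (for $i = m+1$, any consistent couplings, since the cost is zero), and glue the consecutive couplings into a joint law over $(j_1,\dots,j_T)$ by the standard Markov construction, which is legitimate because adjacent couplings share the marginal $W^{t+1}_{(i,\cdot)}$. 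Placing this law on the constant-$i$ slice and zero elsewhere yields a feasible $M$ with $P_t(M) = W^t$ and $\langle C, M \rangle$ exactly equal to the relaxed TGOSPA objective, giving the matching upper bound and hence equality.

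I expect the main obstacle to be the one-step optimal-transport identity and its exact matching with the half-weighted, $j \le n$-only switching penalty of \eqref{eq:t-gospa}: one must verify both that the penalty is a valid lower bound for \emph{every} coupling (so the first inequality holds for arbitrary feasible $M$) and that it is attained (so the gluing realizes equality), together with correct bookkeeping of the dummy nodes, whose masses $n$ and $m$ carry no switching cost. The gluing is routine once marginal consistency is checked, and the support argument based on $F = \infty$ is what allows the pairwise projections to be read as per-target couplings in the first place.
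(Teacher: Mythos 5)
Your proof is correct, and its skeleton matches the paper's: both directions hinge on the identification $W^t = P_t(M)$ and the decomposition $\langle C, M \rangle = \sum_{t=1}^T \langle D^t, P_t(M) \rangle + \sum_{t=1}^{T-1} \langle F, P_{t,t+1}(M) \rangle$. Where you differ is in the key lemma and in the explicitness of the converse. For the bound ``MOT value $\ge$ relaxed TGOSPA value'', the paper argues elementarily: $P_{t,t+1}(M)_{(i,j),(i,j)} \le \min\bigl(W^t_{(i,j)}, W^{t+1}_{(i,j)}\bigr)$, each unit of off-diagonal mass pays $\gamma^p/2$ per non-dummy endpoint, and the identity $a+b-2\min(a,b)=|a-b|$ then yields \eqref{eq:tgospa_ineq}. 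You instead recognize each row-$i$ subproblem as a Wasserstein-1 problem for the star metric with center $n+1$ and edge length $\gamma^p/2$, and invoke the tree formula for $W_1$; this is the same computation in more conceptual packaging, and it hands you attainment for free, which you then reuse in the converse. In that converse direction your argument is actually more complete than the paper's: the paper asserts in one sentence that an $M$ with $P_t(M)=W^t$ and diagonal pairwise projections $\min(W^t,W^{t+1})$ exists (``as much mass as possible is unmoved''), glossing over two points you handle explicitly, namely (i) that a feasible relaxed-TGOSPA solution does not satisfy the marginal constraints of \eqref{eq:t-gospa_mot} until the unconstrained corner entry $W^t_{(m+1,n+1)}$ is reset to the core mass (a zero-cost change that leaves \eqref{eq:assignment_matrices_targets}--\eqref{eq:assignment_matrices_estimates} and the TGOSPA objective untouched), and (ii) that the consecutive optimal couplings must be glued into a joint law with the prescribed pairwise projections, which your Markov construction supplies. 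In short, your route buys a cleaner conceptual reading (per-target unbalanced $W_1$ with a dummy sink) and a fully explicit existence argument, at the price of invoking the tree-metric $W_1$ identity that the paper's pointwise estimate avoids.
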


\begin{proof}
    To show that the optimal values coincide, we first claim that for any feasible solution $M$ of
    \eqref{eq:t-gospa_mot}, there exists a feasible solution to the LP-relaxed version \eqref{eq:t-gospa} with a lower or
    equal objective value. To see this, let
    $W^t = P_t(M)$ for $t = 1, \dots, T$.
    From the constraints in \eqref{eq:t-gospa_mot}, it follows that $\{W^t\}_t$ is a feasible solution
    to the LP relaxation of \eqref{eq:t-gospa}.
    Next, a direct calculation gives that 
    \begin{align}
        \!\!\!\!\!\!\!\! \langle F, P_{t,t+1}(M) \rangle \!
        &\ge \! \frac{\gamma^p}{2} \! \sum_{i,j=1}^{m, n} \! \left[ W^t_{(i,j)} \! + \! W^{t+1}_{(i,j)} \! - \! 2 \min\left(\!W^t_{(i,j)}, W^{t+1} _{(i,j)} \right) \! \right] \nonumber \\
        &= \frac{\gamma^p}{2} \sum_{i=1}^m \sum_{j=1}^n \left| W^{t+1}_{(i,j)} - W^t_{(i,j)} \right|, \label{eq:tgospa_ineq}
    \end{align}
    where the inequality is due to $P_{t,t+1}(M)_{(i,j), (i,j)} \le W^s_{ij}$ for $s = t, t+1$, and where the last equality follows from the identity
    $a + b - 2 \min(a, b) = |a - b|$ for $a, b \in \R$. Using \eqref{eq:tgospa_ineq}, we get
    $\langle C, M \rangle
        = \sum_{t=1}^T \langle D^t, P_t(M) \rangle + \sum_{t=1}^{T-1} \langle F, P_{t,t+1}(M) \rangle \ge \sum_{t=1}^T \sum_{i=1}^{m+1} \sum_{j=1}^{n+1} D^t_{(i,j)} W^t_{(i,j)}
        + \frac{\gamma^p}{2} \sum_{t=1}^{T-1} \sum_{i=1}^m \sum_{j=1}^n \left| W^{t+1}_{(i,j)} - W^t_{(i,j)} \right|$,
    which proves the claim. Conversely, let $\{W^t\}_t$ be a feasible solution to the LP-relaxed version of \eqref{eq:t-gospa}. From \eqref{eq:tgospa_ineq}, we see that a sufficient condition for equality 
    is that $M$ is feasible to \eqref{eq:t-gospa_mot}, $P_t(M) = W^t$ for $t = 1, \dots, T$, and that $M$ is such that $P_{t,t+1}(M)_{(i,j), (i,j)} = \min\left(W^t_{(i,j)}, W^{t+1}_{(i,j)}\right)$ for $t = 1, \dots, T-1$,
    $i = 1, \dots, m$ and $j = 1, \dots, n$. Such $M$ exists as it simply corresponds to a transport plan where as much mass as possible is unmoved.
\end{proof}

\begin{figure}
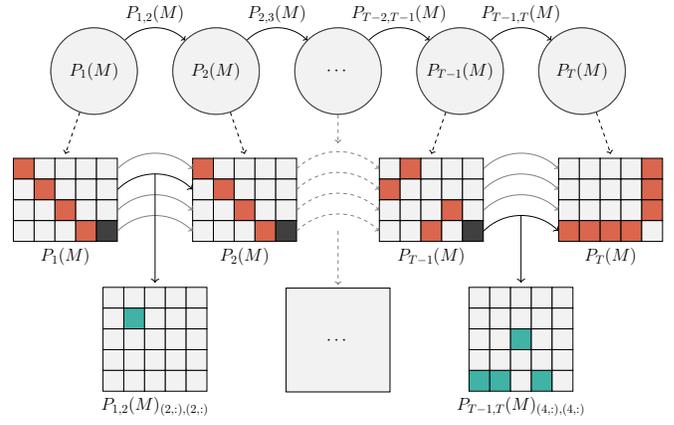

    \centering
    \resizebox{\columnwidth}{!}{
        \includestandalone{figures/structure_figure}
    }
    \caption{Illustration of the problem structure. The top row shows the high level structure, the middle row the assignment
    matrices, and the bottom row the flows between rows in the assignment matrices.}
    \label{fig:structure}
\end{figure}

In Figure \ref{fig:structure}, the structure of problem \eqref{eq:t-gospa_mot} is shown. The first part of the cost tensor \eqref{eq:t-gospa_mot-cost} corresponds
to costs on the assignment matrices directly (middle row), while the second part corresponds to transports between
them (bottom row).

\begin{remark}
The marginals
$\bar{\mu}$ and $\tilde{\mu}$
have equal mass, which
is a prerequisite for utilizing the optimal transport
framework. Introducing the last elements in $\bar{\mu}$ and $\tilde{\mu}$ is one way to handle unbalanced
transport problems \cite{georgiou2008metrics, beier2023unbalanced}.
\end{remark}

\begin{remark}
    There are multiple ways in which the LP-relaxed version of \eqref{eq:t-gospa} can be formulated as an optimal transport problem. For another formulation, see \cite[Sec.~4.1]{nevelius2024efficient}.
\end{remark}

\subsection{Block Coordinate Ascent in the Lagrangian Dual}
\label{sub_sec:block_coord_asc}

To derive a Sinkhorn
algorithm for \eqref{eq:t-gospa_mot}, we first add an entropy regularization to the objective function, which yields
\begin{subequations}\label{eq:t-gospa_mot_reg}
\begin{align} 
    \minimize_{M \in \mathcal{M}_+} \quad & \langle C, M \rangle + \varepsilon E(M), \label{eq:t-gospa_mot_reg_obj} \\
    \subjectto                            \quad & P_t(M) \mathbf{1}_{n+1} = \bar{\mu}, \: t = 1, \dots, T,
                                                      \label{eq:t-gospa_mot_reg_row_con} \\
                                                & P_t(M)^\top \mathbf{1}_{m+1} = \tilde{\mu}, \: t = 1, \dots, T.
                                                      \label{eq:t-gospa_mot_reg_col_con}
\end{align}
\end{subequations}
For small values of $\varepsilon$, the optimal value of \eqref{eq:t-gospa_mot_reg} is close to the optimal value of \eqref{eq:t-gospa_mot} (cf.~\cite[Sec.~4.5]{peyre2019computational}).
Next, following similar steps as in the literature (see, e.g., \cite{cuturi_sinkhorn_2013, benamou_iterative_2014, peyre2019computational, haasler_multimarginal_2021, ringh_graph-structured_2022, haasler_scalable_2023}), we Lagrangian relax the constraints \eqref{eq:t-gospa_mot_reg_row_con} and \eqref{eq:t-gospa_mot_reg_col_con}
with the dual variables $\bar{\lambda}^t \in \R^{m+1}$ and $\tilde{\lambda}^t \in \R^{n+1}$, respectively. Taking the derivative of the Lagrangian with respect to $M$ and setting it equal to zero gives
\begin{equation} \label{eq:primal_sol}
\begin{aligned}
    M^{(\bar{\lambda},\tilde{\lambda})}_{(i_1, j_1), \dots, (i_T, j_T)}
        &= \exp \left( -\frac{C_{(i_1, j_1), \dots, (i_T, j_T)}}{\varepsilon} \right) \\
        & \quad {} \cdot \prod_{t = 1}^T \exp \left( \frac{\bar{\lambda}^t_{i_t}}{\varepsilon} \right)
            \prod_{t = 1}^T \exp \left( \frac{\tilde{\lambda}^t_{j_t}}{\varepsilon} \right),
\end{aligned}    
\end{equation}
which, in turn, gives the dual problem
\begin{align} \label{eq:t_gospa_dual_reg}
    \maximize_{\substack{\bar{\lambda}^t \in \R^{m+1}, \, \tilde{\lambda}^t \in \R^{n+1} \\ t = 1, \dots, T}}
        \phi(\bar{\lambda}, \tilde{\lambda}),
\end{align}
where $\phi(\bar{\lambda}, \tilde{\lambda}) = \tilde{\phi}(\bar{\lambda}, \tilde{\lambda})+ \varepsilon (m+1)^T (n+1)^T$, and where
\begin{align*}
    & \tilde{\phi}(\bar{\lambda}, \tilde{\lambda})
        = -\varepsilon \sum_{(i_1,j_1), \dots, (i_T,j_T)}
            \exp \left( -\frac{C_{(i_1, j_1), \dots, (i_T, j_T)}}{\varepsilon} \right) \\
            & \cdot 
                \prod_{t=1}^T \! \exp \! \left( \! \frac{\bar{\lambda}^t_{i_t}}{\varepsilon} \! \right) \!
                \prod_{t=1}^T \! \exp \! \left( \! \frac{\tilde{\lambda}^t_{j_t}}{\varepsilon} \! \right)
                \! + \! \sum_{t=1}^T \sum_{i=1}^{m+1} \bar{\mu}_i \bar{\lambda}^t_i
                + \! \sum_{t=1}^T \sum_{j=1}^{n+1} \tilde{\mu}_j \tilde{\lambda}^t_j.
\end{align*}
We have the following result regarding the primal-dual problems \eqref{eq:t-gospa_mot_reg} and \eqref{eq:t_gospa_dual_reg}.
\begin{proposition}\label{thm:strong_duality}
    \textcolor{black}{The optimization problems \eqref{eq:t-gospa_mot_reg} and \eqref{eq:t_gospa_dual_reg} are both convex, both have optimal solutions, and they attain the same optimal value. Moreover, the optimal solutions to the two problems are related via \eqref{eq:primal_sol}.}
\end{proposition}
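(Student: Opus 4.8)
The plan is to treat \eqref{eq:t-gospa_mot_reg} as a convex program with affine equality constraints and to read off convexity, attainment, strong duality, and the primal--dual link \eqref{eq:primal_sol} from standard convex-duality arguments, checking each hypothesis in turn. For convexity, I would first note that $x \mapsto x\log x - x + 1$ is convex on $\R_+$ with the convention $0\log 0 = 0$, so $E$ is convex and the objective \eqref{eq:t-gospa_mot_reg_obj} is the sum of a linear functional and a convex functional; since \eqref{eq:t-gospa_mot_reg_row_con}--\eqref{eq:t-gospa_mot_reg_col_con} are affine in $M$ and $\mathcal{M}_+$ is a convex cone, the primal is convex. For the dual, $\phi$ differs from $\tilde{\phi}$ by a constant, and $\tilde{\phi}$ is the sum of affine terms and $-\varepsilon$ times a sum of exponentials of affine functions; as each such exponential is convex, the negated sum is concave, so \eqref{eq:t_gospa_dual_reg} is the maximization of a concave function, i.e.\ a convex problem.

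Weak duality and the relationship are essentially built in. The formula \eqref{eq:primal_sol} is exactly the stationarity condition $\nabla_M L = 0$ of the Lagrangian $L$, so $M^{(\bar{\lambda},\tilde{\lambda})}$ minimizes $L(\cdot,\bar{\lambda},\tilde{\lambda})$ and $\phi$ is the associated dual function; thus $\sup \phi \le $ (primal optimum) automatically. Differentiating gives $\partial\phi/\partial\bar{\lambda}^t_i = \bar{\mu}_i - (P_t(M^{(\bar{\lambda},\tilde{\lambda})})\mathbf{1}_{n+1})_i$ and the analogous identity for $\tilde{\lambda}$, so a dual stationary point is precisely one at which $M^{(\bar{\lambda},\tilde{\lambda})}$ satisfies \eqref{eq:t-gospa_mot_reg_row_con}--\eqref{eq:t-gospa_mot_reg_col_con}. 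At such a point the constraint terms in $L$ vanish, so the primal objective of $M^{(\bar{\lambda},\tilde{\lambda})}$ equals $\phi$; combined with weak duality this closes the gap and identifies $M^{(\bar{\lambda},\tilde{\lambda})}$ as primal optimal. It therefore remains only to exhibit a dual maximizer and a primal minimizer.

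The decisive construction is the $i$-block-diagonal product plan $M_{(i,j_1),\dots,(i,j_T)} = \bar{\mu}_i \prod_{t=1}^T \tilde{\mu}_{j_t}/(m+n)$, set to zero on all entries with non-constant $i$-index. A short check (using $\sum_j \tilde{\mu}_j = \sum_i \bar{\mu}_i = m+n$) shows it satisfies \eqref{eq:t-gospa_mot_reg_row_con}--\eqref{eq:t-gospa_mot_reg_col_con}, is supported exactly on the finite-cost entries (those with $i_1 = \cdots = i_T$), and is \emph{strictly} positive there since every $\bar{\mu}_i,\tilde{\mu}_j > 0$. This single plan does three jobs at once: it shows the feasible set is nonempty; it is a feasible point of finite objective, so minimizing the lower-semicontinuous objective over the feasible polytope (compact because the marginals force $\langle \mathbf{1}, M\rangle = m+n$ together with $M \ge 0$) yields a primal minimizer by Weierstrass, unique by strict convexity of $E$; and it lies in the relative interior of $\dom(\langle C,\cdot\rangle + \varepsilon E)$. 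The last fact is the constraint qualification which, for affine constraints, lets one conclude by standard convex duality (cf.\ \cite{peyre2019computational}) that strong duality holds and a dual maximizer is attained.

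The genuine technical care lies in the infinite entries of $C$: they make the objective extended-real-valued, so the effective domain is a lower-dimensional face of $\mathcal{M}_+$ and one must argue with the \emph{relative} interior $\ri$ rather than the interior. The product plan above is designed precisely to land in that relative interior, and I would check both that it does so and that the formal derivation behind \eqref{eq:primal_sol} remains valid there — here it helps that $\exp(-C/\varepsilon) = 0$ automatically forces $M^{(\bar{\lambda},\tilde{\lambda})}$ to vanish on the infinite-cost entries, consistently with the effective domain. A secondary subtlety worth flagging is that $\phi$ is invariant under the $T$-parameter gauge shift $\bar{\lambda}^t_i \mapsto \bar{\lambda}^t_i + c^t$, $\tilde{\lambda}^t_j \mapsto \tilde{\lambda}^t_j - c^t$, so the dual maximizer is non-unique; existence of \emph{some} maximizer nonetheless follows from the constraint qualification, which is all the proposition asserts.
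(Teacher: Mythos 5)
Your proof follows essentially the same route as the paper's (outlined) proof: the crucial step in both is exhibiting a feasible point of \eqref{eq:t-gospa_mot_reg} that is strictly positive on every entry where $C$ is finite --- your block-diagonal product plan is exactly such a point --- after which convexity, strong duality with dual attainment (via the relative-interior/affine-constraint qualification), primal attainment by compactness, and the primal--dual link \eqref{eq:primal_sol} all follow from standard convex duality, just as the paper invokes. One small caution: for the marginal constraints to check out, your plan must be read with per-factor normalization, i.e.\ $M_{(i,j_1),\dots,(i,j_T)} = \bar{\mu}_i \prod_{t=1}^T \bigl(\tilde{\mu}_{j_t}/(m+n)\bigr)$ (total normalization $(m+n)^T$, not $(m+n)$).
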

\begin{proof}
    Due to space limitations, only a proof outline is given.
    The first step is to observe that there exists a feasible solution to \eqref{eq:t-gospa_mot_reg} with the property that $M_{(i_1, j_1), \dots, (i_T, j_T)} > 0$ for all indices such that $C_{(i_1, j_1), \dots, (i_T, j_T)} < \infty$.
    The remaining results them follows by appropriate use of strong duality; see, e.g., \cite[Chp.~5]{boyd2004convex}.
\end{proof}

Based on the proposition above, an algorithm to solve \eqref{eq:t-gospa_mot_reg} is now constructed as a coordinate ascent algorithm to solve \eqref{eq:t_gospa_dual_reg}.
To simplify the presentation, we introduce the transformed dual variables $\bar{u}^t = \exp ( \bar{\lambda}^t / \varepsilon )$ and $\tilde{u}^t = \exp ( \tilde{\lambda}^t / \varepsilon )$, and the tensor $K = \exp \left( -C / \varepsilon \right)$.
Next, let $M^{(\bar{\lambda},\tilde{\lambda})}$ be a tensor of the form \eqref{eq:primal_sol}, and
let $\bar{w}_{i_{\tau}}^{\tau} = \big(P_{\tau}(M^{(\bar{\lambda},\tilde{\lambda})}) \mathbf{1}_{n+1}\big)_{i_{\tau}} \Big/ \bar{u}_{i_{\tau}}^{\tau}$. We observe that
\begin{equation*}
\begin{multlined}[\columnwidth]
    \bar{w}_{i_{\tau}}^{\tau}
        = \sum_{j_t}
            \sum_{(i_1, j_1), \dots, (i_T, j_T) \setminus (i_t, j_t)} \!\!\!\!\!\!\!\! K_{(i_1, j_1), \dots, (i_T, j_T)}
            \prod_{\substack{t = 1 \\ t \neq \tau}}^T \bar{u}^t_{i_t} \prod_{t=1}^T \tilde{u}^t_{j_t}
\end{multlined}
\end{equation*}
is independent of $\bar{u}_{i_{\tau}}^{\tau}$. Now, since problem \eqref{eq:t_gospa_dual_reg} is convex, in order to maximize $\phi(\bar{\lambda}, \tilde{\lambda})$ with respect to $\lambda_{i_{\tau}}^{\tau}$, we take the derivative of the function and set it to zero.
This gives
\begin{align*}
    0 = \frac{\partial \phi}{\partial \lambda_{i_{\tau}}^{\tau}}
        = \bar{\mu}_{i_{\tau}} -\bar{u}_{i_{\tau}}^{\tau} \bar{w}_{i_{\tau}}^{\tau},
\end{align*}
from which we obtain the update rule
\begin{equation} \label{eq:u_bar_vec_it}
    \bar{u}^\tau \gets \bar{\mu} \oslash P_{\tau}(M^{(\bar{\lambda},\tilde{\lambda})}) \mathbf{1}_{n+1} \odot \bar{u}^\tau, \quad \tau = 1, \dots, T,
\end{equation}
where $\oslash$ and $\odot$ denotes elementwise division and multiplication, respectively.
Analogously, we obtain the update rule
\begin{equation} \label{eq:u_tilde_vec_it}
    \tilde{u}^\tau \gets \tilde{\mu} \oslash P_{\tau}(M^{(\bar{\lambda},\tilde{\lambda})})^\top \mathbf{1}_{m+1} \odot \tilde{u}^\tau,
    \quad \tau = 1, \dots, T.
\end{equation}

\subsection{Efficient Computation of Projections of Transport Tensors and Convergence of the Sinkhorn Algorithm} \label{sub_sec:efficent_proj}

Iteratively performing the updates \eqref{eq:u_bar_vec_it} and
\eqref{eq:u_tilde_vec_it} is
coordinate-wise ascent in the dual problem \eqref{eq:t_gospa_dual_reg}, i.e., it is a Sinkhorn algorithm for solving \eqref{eq:t-gospa_mot_reg}.
However, the Sinkhorn iterations only partially alleviate the computational cost in the multimarginal setting. More specifically,
\eqref{eq:u_bar_vec_it} and \eqref{eq:u_tilde_vec_it}
require the computation of $P_t(M^{(\bar{\lambda},\tilde{\lambda})})$.
The latter involves nested sums which, if evaluated naively, requires $\mathcal{O}(m^T n^T)$ operations to evaluate.
To derive a method for efficiently computing $P_t(M^{(\bar{\lambda},\tilde{\lambda})})$, let $k^t_{(i_t,j_t)} = \exp( -D^t_{(i_t,j_t)} / \varepsilon )$ and
$\hat{k}_{(i_t, j_t), (i_{t+1}, j_{t+1})} = \exp(- F_{(i_t,j_t), (i_{t+1},j_{t+1})} / \varepsilon )$,
and note that by the structure of the cost in \eqref{eq:t-gospa_mot-cost}, the tensor $K$ can be factorized as
\begin{align*}
    K_{(i_1, j_1), \dots, (i_T, j_T)}
    = \prod_{t=1}^T k^t_{(i_t,j_t)} \prod_{t=1}^{T-1} \hat{k}_{(i_t, j_t), (i_{t+1}, j_{t+1})}.
\end{align*}
Using this structure, we have the following result.
\begin{theorem}\label{thm:proj_comp}
$P_{\tau}(M^{(\bar{\lambda},\tilde{\lambda})})_{(i_{\tau}, j_{\tau})}$ can be written as
\begin{equation} \label{eq:alg_2_proj_decomp}
    \begin{split}
        P_{\tau}(M^{(\bar{\lambda},\tilde{\lambda})})_{(i_{\tau}, j_{\tau})}
        = \rarrow{\Phi}^\tau_{(i_\tau, j_\tau)}
          k^\tau_{(i_\tau, j_\tau)}
          \bar{u}^\tau_{i_\tau}
          \tilde{u}^\tau_{j_\tau}
          \larrow{\Phi}^\tau_{(i_\tau, j_\tau)},
    \end{split}
\end{equation}
where
$\rarrow{\Phi}^\tau_{(i_\tau, j_\tau)}$ and $\larrow{\Phi}^\tau_{(i_\tau, j_\tau)}$ are defined recursively as
\begin{equation} \label{eq:alg_2_phi_hat_update}
\begin{aligned} 
    \rarrow{\Phi}^\tau_{(i_\tau, j_\tau)}
    &= \sum_{(i_{\tau - 1}, j_{\tau - 1})}
        \rarrow{\Phi}^{\tau - 1}_{(i_{\tau - 1}, j_{\tau - 1})} \\
        & \quad {} \cdot k^{\tau - 1}_{(i_{\tau - 1}, j_{\tau - 1})}
        \hat{k}_{(i_{\tau - 1}, j_{\tau - 1}), (i_\tau, j_\tau)}
        \bar{u}^{\tau - 1}_{i_{\tau - 1}}
        \tilde{u}^{\tau - 1}_{j_{\tau - 1}},
\end{aligned}
\end{equation}
with $\rarrow{\Phi}^1_{(i_1, j_1)} = 1$, and 
\begin{equation} \label{eq:alg_2_phi_update}
\begin{aligned} 
    \larrow{\Phi}^\tau_{(i_\tau, j_\tau)}
    &= \sum_{(i_{\tau + 1}, j_{\tau + 1})}
        \larrow{\Phi}^{\tau + 1}_{(i_{\tau + 1}, j_{\tau + 1})} \\
        & \quad {} \cdot k^{\tau + 1}_{(i_{\tau + 1}, j_{\tau + 1})}
        \hat{k}_{(i_\tau, j_\tau), (i_{\tau + 1}, j_{\tau + 1})}
        \bar{u}^{\tau + 1}_{i_{\tau + 1}}
        \tilde{u}^{\tau + 1}_{j_{\tau + 1}}.
\end{aligned} 
\end{equation}
with $\larrow{\Phi}^T_{(i_T, j_T)} = 1$.    
\end{theorem}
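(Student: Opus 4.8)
The plan is to exploit the chain (Markov) structure of the factorized transport tensor and carry out the marginalization by dynamic programming, in the spirit of belief propagation on a path graph. Combining \eqref{eq:primal_sol} with the factorization of $K$ established just above the statement, the transport tensor takes the form
\begin{equation*}
    M^{(\bar{\lambda},\tilde{\lambda})}_{(i_1, j_1), \dots, (i_T, j_T)}
    = \prod_{t=1}^T k^t_{(i_t,j_t)} \bar{u}^t_{i_t} \tilde{u}^t_{j_t}
      \prod_{t=1}^{T-1} \hat{k}_{(i_t, j_t), (i_{t+1}, j_{t+1})},
\end{equation*}
so that each time step $t$ contributes a node potential $k^t_{(i_t,j_t)} \bar{u}^t_{i_t} \tilde{u}^t_{j_t}$, while each pair of consecutive time steps contributes an edge potential $\hat{k}_{(i_t, j_t), (i_{t+1}, j_{t+1})}$ that couples only neighbouring indices.

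First I would substitute this expression into the definition of $P_\tau$ and pull the node potential at time $\tau$ out of the sum, since it does not depend on the summation indices. Because the edge potentials couple only consecutive time steps, once $(i_\tau, j_\tau)$ is held fixed the remaining nested sum factors into a product of two independent sums: one over the indices $(i_1,j_1),\dots,(i_{\tau-1},j_{\tau-1})$ to the left of $\tau$, carrying the edge potential that couples $\tau-1$ to $\tau$, and one over the indices $(i_{\tau+1},j_{\tau+1}),\dots,(i_T,j_T)$ to the right of $\tau$, carrying the edge potential that couples $\tau$ to $\tau+1$. Identifying these two factors with $\rarrow{\Phi}^\tau$ and $\larrow{\Phi}^\tau$ immediately yields the product structure \eqref{eq:alg_2_proj_decomp}.

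It then remains to show that the two factors obey the stated recursions. I would do this by establishing the closed forms
\begin{align*}
    \rarrow{\Phi}^\tau_{(i_\tau, j_\tau)}
    &= \sum_{(i_1,j_1),\dots,(i_{\tau-1},j_{\tau-1})}
        \prod_{t=1}^{\tau-1} k^t_{(i_t,j_t)} \bar{u}^t_{i_t} \tilde{u}^t_{j_t}
        \prod_{t=1}^{\tau-1} \hat{k}_{(i_t,j_t),(i_{t+1},j_{t+1})},
\end{align*}
together with the analogous expression for $\larrow{\Phi}^\tau$ summing over the right block with edges indexed $t = \tau, \dots, T-1$, and verifying that these satisfy \eqref{eq:alg_2_phi_hat_update} and \eqref{eq:alg_2_phi_update}. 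This is a short induction: the base cases $\rarrow{\Phi}^1 = 1$ and $\larrow{\Phi}^T = 1$ hold because the corresponding products are empty, and the inductive step follows by peeling off the outermost summation index and recognizing precisely the node and edge potentials that the recursion inserts.

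The calculations are all routine, and the only point requiring genuine care is the bookkeeping of which edge potentials attach to the forward message versus the backward message. Specifically, the edge coupling $\tau-1$ to $\tau$ depends on the free index $(i_\tau,j_\tau)$ and must be absorbed into $\rarrow{\Phi}^\tau$, while the edge coupling $\tau$ to $\tau+1$ must be absorbed into $\larrow{\Phi}^\tau$; getting the two index ranges in the products right, consistently with the boundary cases $\tau = 1$ and $\tau = T$, is the main thing to check so that no edge potential is double counted or omitted.
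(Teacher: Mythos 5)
Your proposal is correct and matches the paper's intended argument: the paper's own proof simply states that the identity "can be readily shown" from the recursions and omits all details, and what you have written out --- the Markov/chain factorization of $M^{(\bar{\lambda},\tilde{\lambda})}$, the splitting of the marginalization into independent left and right sums once $(i_\tau, j_\tau)$ is fixed, and the inductive verification of the forward and backward messages with empty-product base cases --- is precisely that omitted verification. Your attention to which edge potential ($\hat{k}$ coupling $\tau-1$ to $\tau$ versus $\tau$ to $\tau+1$) is absorbed into which message is exactly the right bookkeeping to ensure no factor is double counted.
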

\begin{proof}
    With the definitions in \eqref{eq:alg_2_phi_hat_update} and \eqref{eq:alg_2_phi_update}, it can be readily shown that \eqref{eq:alg_2_proj_decomp} holds. Details are omitted.
\end{proof}

The structure in \eqref{eq:alg_2_proj_decomp}--\eqref{eq:alg_2_phi_update} means that we have a way of computing the projections in $\mathcal{O}(m^2 n^2)$ operations 
instead of $\mathcal{O}(m^T n^T)$ operations,%
\footnote{This can further be reduced to $\mathcal{O}(\max(m, n) \min(m, n)^2)$ by utilizing that infinite elements in $F$ give zero elements in $\hat{k}$.}
greatly improving the computational speed of the updates \eqref{eq:u_bar_vec_it} and \eqref{eq:u_tilde_vec_it}.
Finally, the results from Section \ref{sub_sec:block_coord_asc} and Theorem~\ref{thm:proj_comp}
yields Algorithm~\ref{alg:sinkhorn_t_gospa}.
By using \cite{luo_convergence_1992}, we get that the algorithm is convergent in the following sense (cf.~\cite[Prop.~1]{haasler_scalable_2023}).

\begin{theorem}\label{thm:convergence}
    Algorithm \ref{alg:sinkhorn_t_gospa} converges
    linearly to an optimal solution of \eqref{eq:t_gospa_dual_reg}.
\end{theorem}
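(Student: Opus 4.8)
The plan is to recognize Algorithm~\ref{alg:sinkhorn_t_gospa} as an exact cyclic block coordinate ascent method applied to the concave dual objective $\phi$ in \eqref{eq:t_gospa_dual_reg}, and then to invoke the linear-convergence theory for coordinate descent due to Luo and Tseng \cite{luo_convergence_1992}, in the same manner as \cite[Prop.~1]{haasler_scalable_2023}. The blocks are the $2T$ dual vectors $\bar\lambda^1,\dots,\bar\lambda^T,\tilde\lambda^1,\dots,\tilde\lambda^T$, and by construction each update \eqref{eq:u_bar_vec_it} (respectively \eqref{eq:u_tilde_vec_it}) maximizes $\phi$ over the corresponding block exactly, since it was derived by setting the block partial derivative of $\phi$ to zero. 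Consequently the sequence of values $\phi(\bar\lambda,\tilde\lambda)$ is nondecreasing along the iterations, and all iterates remain in the superlevel set determined by the initialization.

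First I would recast the minimization of $-\phi$ into the canonical form treated in \cite{luo_convergence_1992}, namely $f(x) = g(Ex) + \langle b, x\rangle$ over a box. Collecting all dual variables into a single vector $x = (\bar\lambda^1,\dots,\tilde\lambda^T)$, the box is the whole space $\R^{T(m+n+2)}$ (the dual variables are unconstrained, equivalently $\bar u,\tilde u > 0$). Using $\prod_t \exp(\bar\lambda^t_{i_t}/\varepsilon)\prod_t\exp(\tilde\lambda^t_{j_t}/\varepsilon) = \exp\big(\tfrac1\varepsilon(\sum_t \bar\lambda^t_{i_t}+\sum_t \tilde\lambda^t_{j_t})\big)$, the nonlinear part of $-\phi$ equals $g(Ex)$ with $g(z)=\varepsilon\sum_{s}\exp(z_s)$, where $s$ ranges over the finite-cost tensor entries, $E$ is the $0/1$ incidence matrix encoding which dual coordinates enter each exponent, and the constant $-C_s/\varepsilon$ is absorbed as a fixed shift; the remaining part is the linear term $\langle b, x\rangle$ with $b$ the stacked negative marginals $-\bar\mu,-\tilde\mu$. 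The function $g$ is smooth and strictly convex, with $\nabla^2 g=\varepsilon\,\diag(\exp(z))\succ 0$ and gradient Lipschitz on bounded sets, so the structural hypotheses of \cite{luo_convergence_1992} are met; existence of a minimizer of $-\phi$ follows from Proposition~\ref{thm:strong_duality}.

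The main conceptual obstacle is that $-\phi$ is not strongly convex and its minimizer is not unique, so a naive strong-convexity argument cannot give a linear rate. Indeed, the equal-mass property $\mathbf 1_{m+1}^\top\bar\mu=\mathbf 1_{n+1}^\top\tilde\mu$ produces a gauge invariance: adding constants $a_t\mathbf 1_{m+1}$ to $\bar\lambda^t$ and $b_t\mathbf 1_{n+1}$ to $\tilde\lambda^t$ with $\sum_t(a_t+b_t)=0$ leaves every exponent in \eqref{eq:primal_sol} and the linear term unchanged, so $\phi$ is constant along a $(2T-1)$-dimensional subspace; equivalently $E$ has nontrivial nullspace and $\nabla^2(-\phi)=E^\top\nabla^2 g\,E$ is only positive semidefinite. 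This degeneracy is exactly what the Luo--Tseng framework is designed to handle: the composite form $g(Ex)+\langle b,x\rangle$ makes the optimal solution set polyhedral, and a Hoffman-type bound yields a local Lipschitzian error bound relating the distance from an iterate to the solution set (modulo the invariance subspace) to the norm of the coordinatewise residual. Combined with the sufficient ascent guaranteed by exact block maximization, this error bound yields at-least-linear convergence of $\phi$ to its optimum, and of the iterates to an optimal solution of \eqref{eq:t_gospa_dual_reg}.

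To close the argument rigorously I would still need to verify the one quantitative hypothesis of \cite{luo_convergence_1992} that is not purely structural: that the iterates stay in a bounded region, so that the local constants (the modulus of strong convexity of $g$ on the relevant range of $Ex$, the Lipschitz constant of $\nabla g$, and the error-bound constant) apply uniformly. This should follow from monotonicity of $\phi$ together with the observation that, after one full sweep, the marginal-matching updates \eqref{eq:u_bar_vec_it}--\eqref{eq:u_tilde_vec_it} pin down the components of the iterates orthogonal to the $(2T-1)$-dimensional gauge subspace; projecting out that subspace, the iterates lie in a compact superlevel set, which supplies the required uniform bounds. I expect this boundedness bookkeeping, rather than the invocation of \cite{luo_convergence_1992} itself, to be the most delicate part.
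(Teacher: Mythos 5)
Your proposal is correct and follows essentially the same route as the paper: the paper's entire argument for Theorem~\ref{thm:convergence} is to invoke the linear-convergence result of \cite{luo_convergence_1992} for exact cyclic (block) coordinate ascent on the concave dual, exactly as in \cite[Prop.~1]{haasler_scalable_2023}, and your reduction of $-\phi$ to the canonical form $g(Ex)+\langle b,x\rangle$ with strictly convex $g$ (restricted to the finite-cost entries) together with the observation that the gauge degeneracy is what the Luo--Tseng error-bound machinery is built for is precisely the verification that citation presupposes. One small correction: the iterate-boundedness issue you flag in your last paragraph is not a hypothesis you must supply separately---the Luo--Tseng framework handles the unbounded, gauge-invariant optimal set internally via the Hoffman-type error bound, which is exactly why it is cited instead of a strong-convexity argument.
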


\begin{remark}\label{rem:computational_complexity}
From the discussion above, we note that one full iteration in Algorithm~\ref{alg:sinkhorn_t_gospa} requires $\mathcal{O}(T m^2 n^2)$ operations.
While Theorem~\ref{thm:convergence} shows that Algorithm~\ref{alg:sinkhorn_t_gospa} converges linearly, it neither gives an explicit rate in terms of the parameters of the problem, nor a bound on the number of iterations needed in order to get close to the optimal solution.
Such bounds are known to exist for some Sinkhorn methods, see, e.g., \cite{altschuler2023polynomial, lin2022complexity, fan2022complexity}, \cite[Sec.~4.2]{peyre2019computational}.
Deriving such bounds for this problem would be interesting, but it is left for future work.
\end{remark}

\begin{algorithm}[tbh]
    \caption{Block Coordinate Ascent Method for \eqref{eq:t_gospa_dual_reg}}
    \label{alg:sinkhorn_t_gospa}
    \begin{algorithmic}[1]
        \State Initialize $\bar{u}$ and $\tilde{u}$ to one everywhere
        \While{Not converged}
            \State $\larrow{\Phi}^T \gets 1$
            \For{$t = T - 1, \dots, 1$}
                \State Compute $\larrow{\Phi}^t$ from $\larrow{\Phi}^{t+1}$ using \eqref{eq:alg_2_phi_update}
            \EndFor
            \State $\rarrow{\Phi}^1 \gets 1$
            \For{$t = 1, \dots, T-1$}
                \State $\bar{u}^t \gets \bar{\mu} \oslash P_t(M^{(\bar{\lambda},\tilde{\lambda})}) \mathbf{1}_{n+1} \odot \bar{u}^t$, \text{ using \eqref{eq:alg_2_proj_decomp}}
                \State $\tilde{u}^t \gets \tilde{\mu} \oslash P_t(M^{(\bar{\lambda},\tilde{\lambda})})^\top \mathbf{1}_{m+1} \odot \tilde{u}^t$, \text{ using \eqref{eq:alg_2_proj_decomp}}
                \State Compute $\rarrow{\Phi}^{t+1}$ from $\rarrow{\Phi}^t$ using \eqref{eq:alg_2_phi_hat_update}
            \EndFor
            \State $\bar{u}^T \gets \bar{\mu} \oslash P_T(M^{(\bar{\lambda},\tilde{\lambda})}) \mathbf{1}_{n+1} \odot \bar{u}^T$
            \State $\tilde{u}^T \gets \tilde{\mu} \oslash P_T(M^{(\bar{\lambda},\tilde{\lambda})})^\top \mathbf{1}_{m+1} \odot \tilde{u}^T$
        \EndWhile
    \end{algorithmic}
\end{algorithm}

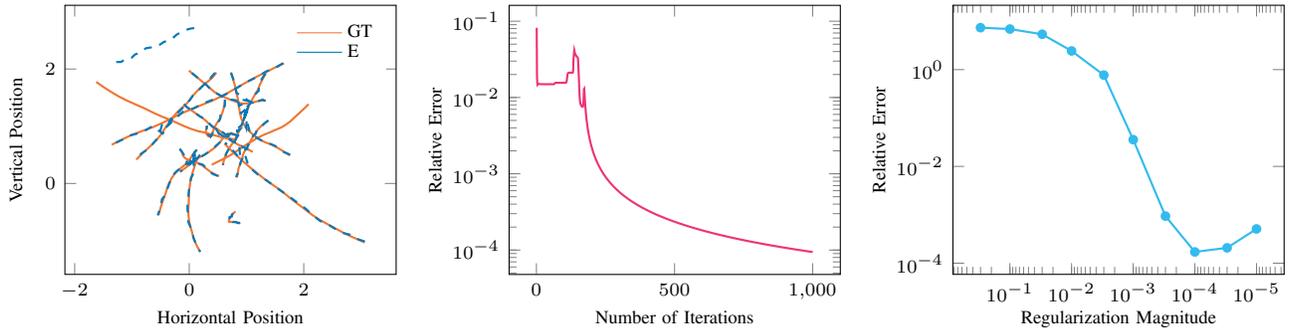
\begin{figure*}
    \centering
    \begin{tikzpicture}
        \begin{groupplot}[group style={group size=3 by 1, horizontal sep=15mm}, width=0.33\linewidth]
            \nextgroupplot[            
                xlabel={Horizontal Position},
                ylabel={Vertical Position},
                x label style={at={(axis description cs:0.5,-0.1)}, anchor=north},
                y label style={at={(axis description cs:-0.1,.5)}, anchor=south},
                legend pos=north east,
                legend style={row sep=-1mm, column sep=0mm, draw=none},
                legend cell align=left,
                axis equal,
            ]
            \foreach \x [evaluate=\x as \y using int(\x+1)] in {0,2,...,30}
            {
                \addplot[thick, tol_vib_orange, forget plot] table [x index=\y, y index=\x]
                    {data/example_data_ground_truth.tsv};
            }
            \addlegendimage{tol_vib_orange}
            \addlegendentry{GT}

            \foreach \x [evaluate=\x as \y using int(\x+1)] in {0,2,...,28}
            {
                \addplot[thick, dashed, tol_vib_blue, forget plot] table [x index=\y, y index=\x]
                    {data/example_data_estimate.tsv};
            }
            \addlegendimage{tol_vib_blue}
            \addlegendentry{E}
            
            \nextgroupplot[            
                xlabel={Number of Iterations},
                ylabel={Relative Error},
                x label style={at={(axis description cs:0.5,-0.1)}, anchor=north},
                y label style={at={(axis description cs:-0.175,.5)}, anchor=south},
                ymode=log,
                legend pos=north east,
                legend style={row sep=-1mm, column sep=0mm, draw=none},
                legend cell align=left,
            ]
            \addplot[thick, tol_vib_magenta] table [x index=0, y index=1] {data/example_convergence_cost.tsv};

            \nextgroupplot[            
                xlabel={Regularization Magnitude},
                ylabel={Relative Error},
                x label style={at={(axis description cs:0.5,-0.1)}, anchor=north},
                y label style={at={(axis description cs:-0.175,.5)}, anchor=south},
                ymode=log,
                xmode=log,
                x dir=reverse,
                legend pos=north east,
                legend style={row sep=-1mm, column sep=0mm, draw=none},
                legend cell align=left,
            ]
            \addplot[thick, tol_vib_cyan, mark=*, mark size=0.5mm] table [x index=0, y index=1]
                {data/example_regularization_analysis.tsv};
        \end{groupplot}
    \end{tikzpicture} 
    \caption{
        \textbf{Left:} Simulated data.
        GT is the simulated ground truth data, and E is the simulated estimated trajectories.
        \textbf{Middle:} Convergence plot of Algorithm \ref{alg:sinkhorn_t_gospa} on the data in
            Figure \ref{fig:misc} (left).
        \textbf{Right:} Relative error depending on the
        parameter $\eta$.
    }
    \label{fig:misc}
\end{figure*}


\section{Numerical Results} \label{sec:numerical_results}

To demonstrate the efficiency of Algorithm~\ref{alg:sinkhorn_t_gospa}, we test it on simulated data. We compare the algorithm with four off-the-shelf LP solvers, namely the well-known commercial solvers Gurobi, CPLEX and MOSEK, as well as the open source solver CLP \cite{forrest_coin-orclp_2023}.
For all simulations we use $c = 0.25$, $p = 1$, $\gamma = 1$ in the definition of the TGOSPA metric. Moreover, 
the value of $\varepsilon$ needs to be selected in relation to the magnitude of the rest of the objective
function \eqref{eq:t-gospa_mot_reg_obj}. For this reason, we take $\varepsilon = \eta \cdot T \cdot \max(\max(D), \max(F))$,%
\footnote{The maximum over $F$ is taken over all finite values of the tensor.}
where $\eta$ becomes the scaled
regularization parameter. For stability reasons, we use the log-sum-exp rewriting and do all computations in the log-domain (see, e.g., \cite[Sec.~4.4]{peyre2019computational}, \cite{schmitzer2019stabilized}).

Data is generated using the algorithm for structured data described in \cite[Sec.~5.1.1]{nevelius2024efficient}.
Figure \ref{fig:misc} (left) shows an example of such data, simulated with parameters $m_t = 14$, $m_f = 2$, $n_f = 1$, $T = 20$, $r = 1$, $q = 0.9$, $c_s = 0.25$, $N_\mathrm{ts} = 20$, $N_\mathrm{max} = 10^5$, and $\sigma = 0.01$.
In the below results,
we use the relative error $|f_{\text{opt}} - f|/f_{\text{opt}}$ as a measure of accuracy. Here,
$f_{\text{opt}}$ is the optimal value of the LP relaxation of \eqref{eq:t-gospa}, and $f = \langle C, M_{\text{iter}} \rangle$ where $M_{\text{iter}}$ is the approximate transport plan either at the current iterate of Algorithm~\ref{alg:sinkhorn_t_gospa} or after termination of the algorithm. Which of the two interpretations of $f$ that is used is clear from context.
The value $f_{\text{opt}}$ is computed using an LP solver.

Our method also allows for parallelization, and can thus be accelerated by running it on a GPU. The accelerated version is also included in the comparisons, and it should be noted that LP solvers can in general not be accelerated in the same way. The experiments were run on a AMD Ryzen 5 5600X 3701 MHz CPU and a NVIDIA GeForce GTX 1060 GPU. Our method was implemented using PyTorch, and the LP solvers were interfaced using the PuLP package in Python.

\subsection{Illustrating Convergence and Effect of Regularization}

To illustrate the behavior of the algorithm, we first apply it to the simulated data shown in Figure \ref{fig:misc} (left).
For $\eta = 10^{-5}$, the relative error as a function of the number of iterations of Algorithm \ref{alg:sinkhorn_t_gospa}
is shown in Figure \ref{fig:misc} (middle). We note that most of the improvement
happens in a transient phase, suggesting that early stopping should be considered when computational
resources are limited.

Figure \ref{fig:misc} (right) shows the effect
of $\eta$
on the result of 
Algorithm \ref{alg:sinkhorn_t_gospa}.
In the simulations, we terminate the algorithm when the relative step size
$|| \bm{u} - \bm{u}_\text{prev} || / || \bm{u}_\text{prev} ||$ is less than $1.5 \cdot 10^{-6}$. Here, $\bm{u}$ and
$\bm{u}_\text{prev}$ are vectors constructed by concatenating all transformed dual variables from two consecutive iterations.
As expected, small values of $\eta$ leads to good results, and decreasing $\eta$ generally leads to better approximations.
The latter is true except for very small $\eta$.
We believe that the this is due to that the convergence speed is expected to decrease with $\varepsilon$, i.e., with $\eta$ (cf.~Remark~\ref{rem:computational_complexity}), but that we still use the same breaking criteria in all cases.

\subsection{Computational Efficiency}

In order to evaluate the computational efficiency of Algorithm~\ref{alg:sinkhorn_t_gospa}, we vary the size of the input data and compare the
wall-clock running time to those of the LP solvers.
Noted that the LP solvers compute the exact value of the
LP-relaxed TGOSPA metric, while our algorithm only provides an approximation. For this reason, we also record the relative
error compared to the exact solution. Using the data generation procedures
with
parameters $m_t = m - m_f$, $m_f = 0.1m$ (rounded to the nearest integer), $n_f = m_f$, $r = 1$, $q = 0.9$, $c_s = 0.25$,
$N_\mathrm{ts} = m$, $N_\mathrm{max} = 100m$, and $\sigma = 0.01$, we generate
random scenarios for varying number
of trajectories and time steps.
For the case of varying number of trajectories, we fix $T = 100$, and for varying number of time
steps, we fix $m = 100$.
We use $\eta = 10^{-4}$, and terminate the algorithm when the relative step size is less than $10^{-3}$.

For each set of simulation parameters we generate $25$ scenarios.
Averaged results over the scenarios are shown in Figure \ref{fig:timings}.
Among the LP solvers, Gurobi was the fastest, and therefore it is the only one that was run for the largest scenarios. For example, CLP was not able to solve even the smallest scenario in the bottom row of Figure \ref{fig:timings}
in under two minutes.
We see that for larger tracking
scenarios, our algorithm computes solutions, with a mean relative error of around $1 \%$, significantly faster than the LP solvers. 

\pgfplotsset{
    /pgfplots/legend image code/.code={%
        \draw[mark repeat=1,mark phase=2,#1] 
            plot coordinates {
                (0cm,0cm) 
                (0.15cm,0cm)
                (0.3cm,0cm)
            };
    }
}

\begin{figure}
    \centering
    \begin{tikzpicture}
        \begin{groupplot}[group style={group size=2 by 2, horizontal sep=10mm, vertical sep=12mm}, width=0.57\columnwidth]
            \nextgroupplot[            
                xlabel={Number of Targets},
                ylabel={Wall-Clock Time (Seconds)},
                x label style={at={(axis description cs:0.5,-0.1)}, anchor=north},
                y label style={at={(axis description cs:-0.15,.5)}, anchor=south},
                no marks,
                legend style={at={(1.1, 1.22)}, row sep=-1mm, column sep=0mm, draw=none, nodes={scale=0.75, transform shape}, line width = 0.1mm},
                legend columns=3,
                legend cell align=left,
                ymode=log,
            ]
            \addplot[thick, tol_vib_orange] table [x index=0, y index=1] {data/timings_targets.tsv};
            \addlegendentry{Gurobi}
            \addplot[thick, tol_vib_blue] table [x index=0, y index=2] {data/timings_targets.tsv};
            \addlegendentry{CPLEX}
            \addplot[thick, tol_vib_cyan] table [x index=0, y index=3] {data/timings_targets.tsv};
            \addlegendentry{MOSEK}
            \addplot[thick, tol_vib_teal] table [x index=0, y index=4] {data/timings_targets.tsv};
            \addlegendentry{CLP}
            \addplot[thick, tol_vib_magenta] table [x index=0, y index=5] {data/timings_targets.tsv};
            \addlegendentry{Ours (CPU)}
            \addplot[thick, tol_vib_magenta, dashed] table [x index=0, y index=6] {data/timings_targets.tsv};
            \addlegendentry{Ours (GPU)}
            
            \nextgroupplot[            
                xlabel={Number of Targets},
                ylabel={Relative Error},
                x label style={at={(axis description cs:0.5,-0.1)}, anchor=north},
                y label style={at={(axis description cs:-0.15,.5)}, anchor=south},
                legend pos=north east,
                legend style={row sep=-1mm, column sep=0mm, draw=none},
                legend cell align=left,
                legend columns=2,
                ymax=0.04,
            ]
            \addplot[thick, tol_vib_teal] table [x index=0, y index=1] {data/error_targets.tsv};
            \addlegendentry{Mean}
            \addplot[thick, tol_vib_red] table [x index=0, y index=2] {data/error_targets.tsv};
            \addlegendentry{Max}
            \addplot[name path=low_err, draw=none] table [x index=0, y index=3] {data/error_targets.tsv};
            \addplot[name path=high_err, draw=none] table [x index=0, y index=4] {data/error_targets.tsv};
            \addplot[color=tol_vib_teal!25] fill between[of = low_err and high_err];

            \nextgroupplot[            
                xlabel={Number of Time Steps},
                ylabel={Wall-Clock Time (Seconds)},
                x label style={at={(axis description cs:0.5,-0.1)}, anchor=north},
                y label style={at={(axis description cs:-0.15,.5)}, anchor=south},
                no marks,
                legend style={at={(1.1, 1.22)}, row sep=-1mm, column sep=0mm, draw=none, nodes={scale=0.75, transform shape}, line width = 0.1mm},
                legend columns=3,
                legend cell align=left,
                ymode=log,
            ]

            
            \addplot[thick, tol_vib_orange] table [x index=0, y index=1] {data/timings_time_steps.tsv};
            \addlegendentry{Gurobi}
            \addplot[thick, tol_vib_blue] table [x index=0, y index=2] {data/timings_time_steps.tsv};
            \addlegendentry{CPLEX}
            \addplot[thick, tol_vib_cyan] table [x index=0, y index=3] {data/timings_time_steps.tsv};
            \addlegendentry{MOSEK}
            \addplot[thick, tol_vib_magenta] table [x index=0, y index=4] {data/timings_time_steps.tsv};
            \addlegendentry{Ours (CPU)}
            \addplot[thick, tol_vib_magenta, dashed] table [x index=0, y index=5] {data/timings_time_steps.tsv};
            \addlegendentry{Ours (GPU)}
            
            \nextgroupplot[            
                xlabel={Number of Time Steps},
                ylabel={Relative Error},
                x label style={at={(axis description cs:0.5,-0.1)}, anchor=north},
                y label style={at={(axis description cs:-0.15,.5)}, anchor=south},
                legend pos=north east,
                legend style={row sep=-1mm, column sep=0mm, draw=none},
                legend cell align=left,
                legend columns=2,
                ymax=0.055,
            ]
            \addplot[thick, tol_vib_teal] table [x index=0, y index=1] {data/error_time_steps.tsv};
            \addlegendentry{Mean}
            \addplot[thick, tol_vib_red] table [x index=0, y index=2] {data/error_time_steps.tsv};
            \addlegendentry{Max}
            \addplot[name path=low_err, draw=none] table [x index=0, y index=3] {data/error_time_steps.tsv};
            \addplot[name path=high_err, draw=none] table [x index=0, y index=4] {data/error_time_steps.tsv};
            \addplot[color=tol_vib_teal!25] fill between[of = low_err and high_err];
        \end{groupplot}
    \end{tikzpicture} 
    \caption{
        Average runtimes
        and errors for Algorithm \ref{alg:sinkhorn_t_gospa} on varying data sizes.
        The top row shows results for fixed number of time steps $T$ and varying number of targets $m$. The bottom row shows results for varying number of time steps $T$ and fixed number of targets $m$.
        The shaded regions in the right column is 
        one standard deviation from the mean.
    }
    \label{fig:timings}
\end{figure}
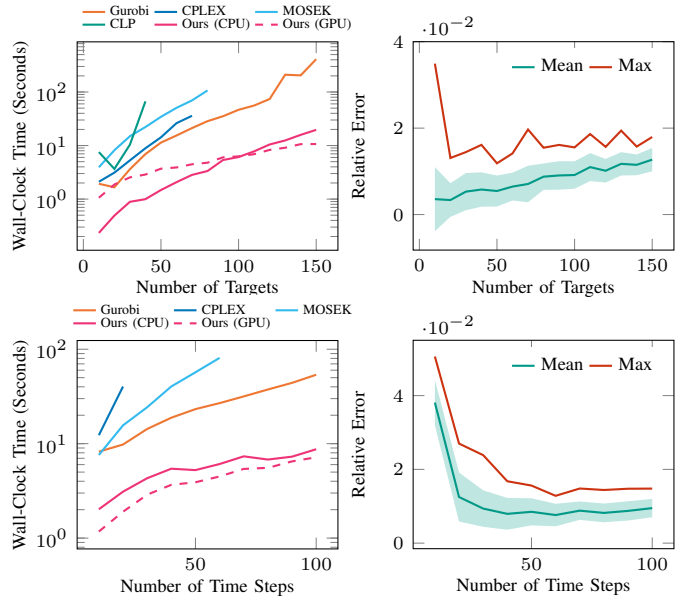


\section{Conclusions} \label{sec:conclusions}

In this paper, we present an algorithm for fast approximation of the LP-relaxed TGOSPA metric. The algorithm is derived by reformulating the TGOSPA problem as an unbalanced multimarginal optimal transport problem,
and leveraging ideas from the literature on entropy regularized optimal transport.
Numerical results show that the method provides adequate approximations of the metric while significantly reducing computational costs.
An interesting future direction for research would be to
explore the possibility of utilizing the differentiability of Algorithm~\ref{alg:sinkhorn_t_gospa} (in the sense of reverse mode of automatic differentiation) to compute gradients with respect to the elements of the cost tensor, which would open
a path toward data-driven MTT algorithms.

\bibliographystyle{plain}
\bibliography{}

\end{document}